\newtheorem{definition}{Definition}
\newtheorem{proposition}{Proposition}
\newtheorem{lemma}{Lemma}
\newtheorem{theorem}{Theorem}
\newtheorem{algorithm}{Algorithm}
\newtheorem{remark}{Remark}
\title{\LARGE \bf
A Time-Periodic Lyapunov Approach for Motion Planning  of \\
Controllable Driftless Systems on $\mbox{SU}(n)$}
\author{H. B. Silveira, P. S. Pereira da Silva and P. Rouchon
\thanks{The first author was fully supported by CAPES. The third author was partially supported by CNPq. The second and third authors were partially
supported by CAPES/COFECUB and ``Agence Nationale de la Recherche'' (ANR), Projet Blanc CQUID number 06-3-13957.}
\thanks{H. B. Silveira and P. S. Pereira da Silva are with Laboratory of Automation and Control, Department of Telecommunications and Control Engineering, University of S\~ao Paulo, Brazil
        {\tt\small hectorbessa@yahoo.com.br} and {\tt\small paulo@lac.usp.br}}
\thanks{P. Rouchon is with Mines ParisTech, Centre Automatique et Syst\`emes, Math\'ematiques et Syst\`emes, France \newline
        {\tt\small pierre.rouchon@mines-paristech.fr}}
}
\begin{document}

\bibliographystyle{plain}

\maketitle
\thispagestyle{empty}
\pagestyle{empty}

\begin{abstract}

For a right-invariant and controllable driftless system on
$\mbox{SU}(n)$, we consider a time-periodic reference trajectory along which
the linearized control system generates $\mathfrak{su}(n)$: such
trajectories always exist and constitute the basic ingredient of
Coron's Return Method. The open-loop controls that we propose, which
rely on a left-invariant tracking error dynamics and on a
fidelity-like Lyapunov function, are determined from a finite
number of left-translations of the tracking error and they assure
global asymptotic convergence towards the periodic reference trajectory. The
role of these translations is to avoid being trapped in the critical
region of this Lyapunov-like function. The convergence proof relies
on a periodic version of LaSalle's invariance principle and the control values are
determined by numerical integration of the dynamics of the system.
Simulations illustrate the obtained
controls for $n=4$ and the generation of the C--NOT quantum gate.
\end{abstract}

\section{INTRODUCTION}

Consider the right-invariant driftless system
\begin{equation}\label{cqs}
    \dot{X}=\sum_{k=1}^{m}u_k H_k X, \hspace{12pt} X(0)=I,
\end{equation}
where $X \in M^n$ is the state, $M^n$ is the Banach space of square
$n \times n$ matrices with complex entries endowed with the
Euclidean norm, $H = \{ H_1,\dots, H_m \} \subset \mathfrak{su}(n)$,
$u_k \in \mathbb{R}$ are the controls, and $I$ is the identity
matrix of $M^n$. The \emph{periodic motion planning problem} for this
system is formulated as follows. Given a \emph{goal state}
$X_{\infty} \in \mbox{SU}(n)$ and $T > 0$, find a smooth periodic
\emph{reference trajectory} $X_r$:~$\mathbb{R}_+ \rightarrow \mbox{SU}(n)$
of period $T$, with $X_r(0)=X_{\infty}$, and determine continuous
open-loop controls $u_k$:~$\mathbb{R}_+ \rightarrow \mathbb{R}$, for
$1 \leq k \leq m$, in a manner that the tracking error between the
trajectory
$X$:~$\mathbb{R}_+ \rightarrow \mbox{SU}(n)$ of (\ref{cqs}) and $X_r$
converges to zero as $t \rightarrow \infty$, that is, $\lim_{t
\rightarrow \infty} [X(t) - X_r(t)] = 0$.

We remark that there
is no loss of generality in assuming that $X(0)=I$ in (\ref{cqs}). Indeed, since system (\ref{cqs}) is right-invariant, if
$(X(t), (u_1(t), \dots, u_m(t)))$, for $t \in \mathbb{R}_+$, is a
solution of (\ref{cqs}) with $X(0)=I$, then $(X(t) X_0,
(u_1(t), \dots, u_m(t)))$, for $t \in \mathbb{R}_+$, is a
solution of (\ref{cqs}) with initial condition $X(0)=X_0 \in
\mbox{SU}(n)$. Therefore, if the periodic motion planning problem
has been solved for system (\ref{cqs}) with $X(0)=I$, it is straightforward to show that it will also be
solved for (\ref{cqs}) with $X(0)=X_0 \in \mbox{SU}(n)$.


The main result of this paper is the determination of a solution for the periodic motion planning problem. This is established by
Theorem~\ref{sat} in Section~2, whose only
assumption is that system (\ref{cqs}) regular, in the sense of Definition~\ref{rsd} in Section~2. The results of Coron's Return Method show
that such condition is always met in case the system is controllable on $\mbox{SU}(n)$ (see Remark~\ref{crm} in Section~2).
Loosely speaking, by finding an appropriate reference trajectory $X_r$,
using the time-dependent change of
coordinates $Z=Z(X,t)=X^\dag X_r(t)$, which corresponds to the tracking error on the group $\mbox{SU}(n)$,
and defining an adequate ``feedback'', we determine an algorithm that obtains, in a finite number of steps, continuous open-loops controls $u_k$, for every $1 \leq k \leq m$, which assure that
the tracking error $X-X_r$ converges to zero as $t \rightarrow \infty$. This algorithm relies on Lyapunov-like convergence results inspired in the
periodic version of LaSalle's invariance principle presented in
\cite{Vidyasagar93}, and in the \emph{ad-condition} stabilization method of
\cite{JurQui78}.
In a certain sense, we have used the real part of the trace of the left-invariant tracking error
$Z$ as a Lyapunov-like function, that is, $V(Z)=\Re(tr(Z))$.
In the case of quantum systems,
$V$ can then be seen as a fidelity-like Lyapunov function.

The problem of steering a quantum system from a given initial state to an arbitrary final state,
which can be regarded as a particular case of the periodic motion planning problem here formulated,
has recently been treated in \cite{SilRou08b} using a flatness-based approach and in the book \cite{Ale08}
(see also the references therein), where many quantum control techniques used in the literature are grouped together and explained in detail,
such as Lyapunov-based methods, optimal control and decompositions of $\mbox{SU}(n)$.
Our Lyapunov-like approach has no restrictions on the goal state $X_\infty \in \mbox{SU}(n)$ and on $n$, as long as system (\ref{cqs}) is regular.

The layout of the paper is as follows. Section~2 is entirely dedicated to the proof of Theorem~\ref{sat} mentioned above.
Simulations illustrate in Section~3 the generation of the Controlled-NOT
(C--NOT) gate for a quantum system with $n=4$. Appendix presents the proof of the important convergence result of Theorem~\ref{dct} in Section~2.

\section{Main Result}

Based on (\ref{cqs}), we define the \emph{reference system}
\begin{equation}\label{rqs}
    \dot{X_r}=\sum_{k=1}^{m}u^r_{k} H_kX_r, \hspace{12pt} X_r(0)=X_{\infty} \in \mbox{SU}(n),
\end{equation}
where $X_r \in M^n$ and the smooth time functions $u^r_{k}$:~$\mathbb{R}
\rightarrow \mathbb{R}$ are still to be specified.

\begin{definition}\label{rsd}
System (\ref{cqs}) is said to be \emph{regular} when, given $T > 0$, there
exist smooth periodic functions $u^T_{k}$:~$\mathbb{R} \rightarrow
\mathbb{R}$ of period $T$, for all $1 \leq k \leq m$, such that the
solution $X^T_r$:~$\mathbb{R}_+ \rightarrow \mbox{SU}(n)$ of
(\ref{rqs}), with $X^T_r(0)=I$ and $u^r_k = u^T_k$, is also periodic
of period $T$ and satisfies
\begin{align}\label{rss}
    \mbox{span} \{ B^j_k(0), 1 \leq k \leq m, j \in \mathbb{N} \} = \mathfrak{su}(n),
\end{align}
where $\mathbb{N}$ is the set of natural numbers (including zero),
$A(t)=\sum_{k=1}^{m} u^T_{k}(t) H_k \in \mathfrak{su}(n)$, $B^0_k(t) =
H_k X^T_r(t)$, $B^{j+1}_k(t) = - A(t)B^j_k(t) + \dot{B}^{j}_k(t)$, $1
\leq k \leq m$, $j \in \mathbb{N}$, $t \in \mathbb{R}$.
\end{definition}

\begin{remark}\label{rss-p}
Note that $A$:~$\mathbb{R} \rightarrow \mathfrak{su}(n)$, $B^j_k$,
$\dot{B}^j_k$:~$\mathbb{R} \rightarrow M^n$ are smooth and also have
period $T$, for every $1 \leq k \leq m$, $j \in \mathbb{N}$. Hence,
they are bounded mappings.
\end{remark}

\begin{remark}\label{crm}
Note that the linearized control system of (\ref{rqs}) (or of (\ref{cqs}))
along the trajectory $(X_r^T,(u_1^T,\dots,u_m^T))$ is given by
$\dot{X}_r^\ell = A(t) X_r^\ell + \sum_{k=1}^m w_k B^0_k(t)$, $w_k
\in \mathbb{R}$. Based on Coron's Return
Method (see \cite{Cor94}, \cite{Cor07}), it can be shown that (\ref{cqs}) is regular in case $\mbox{Lie}(H) = \mathfrak{su}(n)$.
We recall that (\ref{cqs}) is controllable on $\mbox{SU}(n)$ if and only if
$\mbox{Lie}(H) = \mathfrak{su}(n)$ \cite{AlbAle03}.
\end{remark}

For simplicity, we shall assume throughout this paper that
system (\ref{cqs}) is regular, that $T > 0$ has been fixed and that
the functions $u^r_{k}$ in (\ref{rqs}) were specified accordingly,
that is, $u^r_{k}=u^T_{k}$, for $1 \leq k \leq m$. Moreover, we
also assume that the goal state $X_\infty \in \mbox{SU}(n)$ is
fixed. Define $X_r$:~$\mathbb{R} \rightarrow \mbox{SU}(n)$ as $X_r =
X^T_r X_\infty$. Note that $X_r$ is the solution of (\ref{rqs}) with
$X_r(0) = X_\infty$ and that $X_r$ also has period $T$. It will be
shown afterwards that $X_r$ can indeed be used as a reference trajectory.
We also adopt the following notations. The imaginary unit of
$\mathbb{C}$ is denoted by $\imath$ and if $z \in \mathbb{C}$, then
$\Re(z)$ is its real part and $\Im(z)$ its imaginary part.

It is straightforward to verify from (\ref{cqs}) and (\ref{rqs})
that the time-dependent change of coordinates
\begin{equation*}
    Z=Z(t,X)=X^{\dag}X_r(t), \hspace{12pt} \mbox{ for all } (t,X) \in \mathbb{R} \times M^n,
\end{equation*}
along with the time-varying control shift
\begin{equation*}
    v_k \triangleq u_k^r(t) - u_k = u_k^T(t) - u_k, \hspace{12pt} \mbox{ for all } t \in \mathbb{R}, \; 1 \leq k \leq m,
\end{equation*}
determine the left-invariant ``closed-loop system''
\begin{equation}\label{clqs}
    \dot{Z} = Z X^\dag_r(t) \sum_{k=1}^{m} v_k H_k X_r(t), \hspace{8pt} Z(0)=X_\infty \in \mbox{SU}(n),
\end{equation}
for all $(t,Z) \in \mathbb{R} \times M^n$. If we can find continuous functions $v_k$:~$\mathbb{R}_+ \rightarrow
\mathbb{R}$, for each $1 \leq k \leq m$, such that
\begin{equation}\label{clsc}
    \lim_{t \rightarrow \infty} Z(t) = \lim_{t \rightarrow \infty} X^{\dag}(t) X_r(t) = I,
\end{equation}
where $Z$:~$\mathbb{R}_+ \rightarrow \mbox{SU}(n)$ is the solution of system (\ref{clqs}) and $X$:~$\mathbb{R}_+ \rightarrow \mbox{SU}(n)$ is the solution of system (\ref{cqs}) with the continuous open-loop controls
\begin{equation*}
    u_k(t) = u^T_{k}(t) - v_k(t), \hspace{12pt} \mbox{ for all } t \in \mathbb{R}_+, \; 1 \leq k \leq m,
\end{equation*}
it is then clear that
\begin{equation}\label{tec}
    \lim_{t \rightarrow \infty} [X(t) - X_r(t)] = 0,
\end{equation}
thus solving the periodic motion planning problem.
%

Let $V$:~$M^n \rightarrow \mathbb{R}$ be defined by
\begin{equation}\label{llf}
    V(X) = \Re(tr(X)), \hspace{12pt} \mbox{ for all } X \in M^n,
\end{equation}
and consider the \emph{auxiliar system}
\begin{equation}\label{as}
    \dot{W} = W X^\dag_r(t)\sum_{k=1}^{m} f_k a_k(t,W) H_k X_r(t),
\end{equation}
where $(t,W) \in \mathbb{R} \times M^n$, $f_k \neq 0$ is a fixed
real number, $1 \leq k \leq m$, and
\begin{equation}\label{fd}
    a_k(t,W) = f_k \mathrm{V}(W X^{\dag}_r(t) H_k X_r(t)). 
\end{equation}
Notice that the ``closed-loop'' system (\ref{clqs}) with ``feedbacks'' $v_k=f_k a_k(t,Z)$ is nothing but the auxiliar system (\ref{as})--(\ref{fd}). Note also that $V$ in (\ref{llf}) is linear and that, for $X \in
\mbox{SU}(n)$, we have $-n \leq V(X) \leq n$ and $V(X) =
n$ if and only if $X=I$. Furthermore, by construction, $\dot{\mathrm{V}}(t,W)=\sum_{k=1}^{m} a_k(t,W)^2 \geq 0$, for all $(t,W)
\in \mathbb{R} \times M^n$.

In what follows, we shall show how the next theorem, which is a
Lyapunov-like convergence result for the auxiliar system with Lyapunov-like function $V(W) = \Re(tr(W))$, and whose
proof is deferred to Appendix, determines continuous functions $v_k$:~$\mathbb{R}_+ \rightarrow
\mathbb{R}$, for $1 \leq k \leq m$, such that (\ref{clsc}) is satisfied for the
``closed-loop'' system (\ref{clqs}). We remark that the properties of $V$ stated above are essential in the proof.
Our approach to solve the periodic motion planning problem is then summarized in Theorem~\ref{sat}.

\begin{theorem}\label{dct}
Consider the set
\begin{align*}
    G = \{ & x \in \mathbb{R} : x = \sum_{i=1}^{n} \Re(\lambda_i), \mbox{ for some } \lambda_i \in \mathbb{C} \mbox{ such}\\
    & \mbox{that } | \lambda_i | = 1, \prod_{i=1}^{n} \lambda_i = 1, \Im(\lambda_1) = \dots = \Im(\lambda_n) \}.
\end{align*}
Then, $G$ is a finite set, $n \in G$ and $n=\max(G)$. Furthermore,
letting
$\delta$ be the maximal element of the set $G \setminus \{n\}$,
we have that, for all $q=(t_0, W_{t_0}) \in \mathbb{R} \times
\mbox{SU}(n)$,
\begin{equation*}
    V(W_{t_0}) > \delta \Rightarrow \lim_{t \rightarrow \infty} W_q(t) = I,
\end{equation*}
where $W_q$:~$\mathbb{R} \rightarrow \mbox{SU}(n)$ is the solution
of (\ref{as})--(\ref{fd}) with initial condition $W_q(t_0)=W_{t_0}$.
\end{theorem}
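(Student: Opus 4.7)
The plan is to (i) establish the structural properties of $G$ by parametrising the admissible tuples $(\lambda_1,\dots,\lambda_n)$, and (ii) prove the convergence claim via a periodic LaSalle argument whose decisive step uses the regularity hypothesis (\ref{rss}) to force the limit value of $V$ to lie in $G$.

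For the claims about $G$: since $|\lambda_i|=1$ and all $\Im(\lambda_i)$ share a common value $s\in[-1,1]$, each $\lambda_i$ equals either $\sqrt{1-s^2}+\imath s$ or $-\sqrt{1-s^2}+\imath s$, i.e., $e^{\imath\alpha}$ or $e^{\imath(\pi-\alpha)}$ for a single $\alpha\in[-\pi/2,\pi/2]$. If $k$ of them take the second form, then $\prod_i\lambda_i=1$ becomes $(n-2k)\alpha+k\pi\equiv 0\pmod{2\pi}$, which for $n\neq 2k$ admits only finitely many solutions modulo $2\pi$ and for $n=2k$ makes the sum $x=(n-2k)\cos\alpha$ vanish. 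Ranging $k$ over $\{0,\dots,n\}$ shows $G$ is finite. Taking $\lambda_i=1$ for all $i$ gives $n\in G$, and $\Re(\lambda_i)\leq 1$ with equality iff $\lambda_i=1$ forces $n=\max(G)$.

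For the convergence, compactness of $\mbox{SU}(n)$ keeps $W_q$ bounded, while $\dot V=\sum_k a_k(t,W)^2\geq 0$ together with $V\leq n$ makes $V(W_q(\cdot))$ converge to some $V^{\ast}\geq V(W_{t_0})>\delta$. Viewing the flow on the extended space $(\mathbb{R}/T\mathbb{Z})\times\mbox{SU}(n)$, the $\omega$-limit set $\Omega$ is nonempty, compact and invariant, and the periodic LaSalle principle of \cite{Vidyasagar93} gives $V\equiv V^{\ast}$ on $\Omega$. Consequently, any trajectory inside $\Omega$ satisfies $\dot V\equiv 0$, hence $a_k(t,W(t))\equiv 0$ for every $k$, and (\ref{as}) then yields $\dot W\equiv 0$: every point $W_\omega\in\Omega$ is a constant solution obeying
\[
\Re\, tr\bigl(W_\omega X_r^\dagger(t) H_k X_r(t)\bigr)=0,\qquad t\in\mathbb{R},\ 1\leq k\leq m.
\]

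It remains to show $V(W_\omega)\in G$. Set $\bar W:=X_\infty W_\omega X_\infty^\dagger$ and $\hat H_k(t):=(X_r^T(t))^\dagger H_k X_r^T(t)$, so the identity above rewrites as $\Re\,tr(\bar W\hat H_k(t))=0$. From $\dot X_r^T=AX_r^T$ and the recursion defining $B_k^j$, a short induction yields $\hat H_k^{(j)}(t)=(X_r^T(t))^\dagger B_k^j(t)$; differentiating the identity $j$ times and evaluating at $t=0$ therefore gives $\Re\,tr(\bar W B_k^j(0))=0$ for all $j\geq 0$ and $k$. By (\ref{rss}), $\Re\,tr(\bar W H)=0$ for every $H\in\mathfrak{su}(n)$. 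Splitting $\bar W$ into Hermitian and anti-Hermitian parts and noting that the orthogonal complement of $\mathfrak{su}(n)$ in $\mathfrak{u}(n)$ under the real trace form is $\mathbb{R}\imath I$, one obtains $\bar W=S+\imath\mu I$ with $S$ Hermitian and $\mu\in\mathbb{R}$. Combined with $\bar W\in\mbox{SU}(n)$, this forces $S^2=(1-\mu^2)I$, so every eigenvalue of $\bar W$ has imaginary part $\mu$, modulus $1$ and common product $1$; thus $V(W_\omega)=\Re\,tr(\bar W)\in G$. Since $V(W_\omega)=V^{\ast}>\delta=\max(G\setminus\{n\})$ and $V(W_\omega)\in G$, necessarily $V(W_\omega)=n$, so $W_\omega=I$. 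As $(t_\omega,W_\omega)\in\Omega$ was arbitrary, $\Omega=(\mathbb{R}/T\mathbb{Z})\times\{I\}$, whence $W_q(t)\to I$. I expect the main obstacle to be this last paragraph: translating the LaSalle invariance $a_k(t,W_\omega)\equiv 0$ into the spectral constraint $V(W_\omega)\in G$, for which the algebraic identification $\hat H_k^{(j)}(t)=(X_r^T(t))^\dagger B_k^j(t)$ and the spectral analysis of the matrices annihilating $\mathfrak{su}(n)$ under $\Re\,tr(\bar W\,\cdot)$ are the two nontrivial ingredients that let the precisely-tailored definition of $G$ do its work.
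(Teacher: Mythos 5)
Your proof is correct and reaches the paper's conclusion by two genuinely different devices. For the convergence step, the paper avoids a full LaSalle invariance argument: it shows $\ddot\alpha$ is bounded, applies Barbalat's lemma to obtain $b_k^0(t,W_q(t))\to 0$ (hence $\dot W_q\to 0$), then iterates Barbalat inductively on $j$ to conclude $\lim_{t\to\infty}\mathrm{V}(W_q(t)X_r^\dag(t)B_k^j(t)X_\infty)=0$ for every $j$, and transfers these limits to the limit set with the elementary shift Lemma~\ref{cips} together with Proposition~\ref{cls}. You instead invoke periodic LaSalle on $(\mathbb{R}/T\mathbb{Z})\times\mbox{SU}(n)$, obtain $\dot W\equiv 0$ on the $\omega$-limit set directly, and recover the same constraints $\Re\,tr(\bar W B_k^j(0))=0$ (with $\bar W=X_\infty W_\omega X_\infty^\dag$) by repeated differentiation of the invariant identity and the recursion $\hat H_k^{(j)}(t)=(X_r^T(t))^\dag B_k^j(t)$ evaluated at $t=0$, which is easily checked by induction using $\frac{d}{dt}(X_r^T)^\dag=-(X_r^T)^\dag A$. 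The second divergence is algebraic: the paper diagonalises the limit point as $W=M\,\mbox{diag}(\lambda_1,\dots,\lambda_n)\,M^\dag$, translates the constraint via (\ref{rss}) into $\mathrm V(\mbox{diag}(\lambda_1,\dots,\lambda_n)\,X)=0$ for all $X\in\mathfrak{su}(n)$, and tests against the canonical diagonal matrices $D_1,\dots,D_n$ to force $\Im\lambda_1=\dots=\Im\lambda_n$; you split $\bar W$ into Hermitian and anti-Hermitian parts, use that the trace-form orthocomplement of $\mathfrak{su}(n)$ in $\mathfrak{u}(n)$ is $\mathbb{R}\imath I$ to get $\bar W=S+\imath\mu I$, and read the same spectral constraint off unitarity via $S^2=(1-\mu^2)I$. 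Your route is slightly shorter and more conceptual, and the orthocomplement argument is a clean alternative to the $D_\ell$ test; the paper's Barbalat-based route is more self-contained in that it does not rely on an off-the-shelf periodic LaSalle theorem beyond the basic limit-set property. The finiteness argument for $G$ is essentially identical in both.
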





Suppose that $V(X_\infty) > \delta$. In Theorem~\ref{dct}, we choose $q=(0,X_\infty) \in \mathbb{R} \times \mbox{SU}(n)$. Therefore,
$\lim_{t \rightarrow \infty} W_q(t) = I$. Hence, the smooth ``feedbacks'' $v_k$:~$\mathbb{R}_+ \rightarrow
\mathbb{R}$ defined as
\begin{equation*}\label{fdd}
    v_k(t) \triangleq f_k a_k(t,Z(t)) = f_k^2 \mathrm{V}(X^\dag(t) H_k X_r(t)), 
\end{equation*}
for $t \in \mathbb{R}_+$, $1 \leq k \leq m$, assure that $Z(t)=W_q(t)$, for $t \in \mathbb{R}_+$. Indeed, compare (\ref{clqs}) with (\ref{as})--(\ref{fd}).
Thus, (\ref{clsc}) holds.

Now, assume that $V(X_\infty) \leq \delta$.
For this case, based on continuity arguments, we determine an adequate (continuous) path from $X_\infty$ to $I$ which, in a certain sense, reduces the problem to the situation where $V(X_\infty) > \delta$.
In order to achieve this, the main idea is to find a path $\overline{Z}$:~$[0, 1] \rightarrow \mbox{SU}(n)$, with $\overline{Z}(0)=X_\infty$ and
$\overline{Z}(1)=I$, and obtain $0=\theta_0 < \theta_1 < \dots < \theta_{N-1} < \theta_N=1$, such that
$V(\overline{Z}(\theta_{\ell+1})^\dag \overline{Z}(\theta_{\ell})) > \delta$, for all $0 \leq \ell \leq N-1$. It thus follows from Theorem~1 that,
for $1 \leq \ell \leq N$, $\lim_{t \rightarrow \infty} \overline{Z}(\theta_{\ell}) W_{\ell}(t) = \overline{Z}(\theta_{\ell})$, where
$W_\ell$:~$\mathbb{R} \rightarrow \mbox{SU}(n)$ is the
solution of (\ref{as})--(\ref{fd}) with initial condition
$W_{\ell}(T_\ell)=\overline{Z}(\theta_{\ell})^\dag \overline{Z}(\theta_{\ell-1}) \in \mbox{SU}(n)$, where $0 = T_1 < \dots < T_{N+1}$ are such that
$W_{\ell}(T_{\ell+1}) \thickapprox I$. Loosely speaking, we then ``glue'' together
the left-translations $\overline{Z}(\theta_{1})W_1, \dots, \overline{Z}(\theta_{N})W_N$ in an appropriate manner in order
to define a continuous solution $(Z(t), (v_1(t), \dots, v_m(t)))$, for $t \in \mathbb{R}_+$, of system (\ref{clqs}) that satisfies (\ref{clsc}).
We remark that, for every $1 \leq \ell \leq N$, it is as if
we were in the case $V(X_\infty) > \delta$. In the sequel, we formalise these arguments in detail and determine an algorithm which obtains, in $N$ steps, continuous functions $v_k$:~$\mathbb{R}_+ \rightarrow
\mathbb{R}$, for $1 \leq k \leq m$, such that (\ref{clsc}) holds.

It is a standard result that any $X_\infty \in \mbox{SU}(n)$ can be
written as
$X_\infty = M^\dag
\mbox{diag}(\exp{\imath \lambda_1},\dots,\exp{\imath \lambda_n}) M$,
where $M$ is a unitary matrix, $\lambda_1, \dots, \lambda_n \in
\mathbb{R}$ and $\sum^n_{i=1} \lambda_i = 0$. Consider the
path $\overline{Z}$:~$[0, 1] \rightarrow \mbox{SU}(n)$ from $X_\infty$ to $I$
defined by
\begin{align*}
    \overline{Z}(\theta)=M^\dag \mbox{diag}(\exp{\imath \lambda_1 (1-\theta) },\dots,\exp{\imath \lambda_n (1-\theta)}) M, 
\end{align*}
for all $\theta \in [0,1]$.
Let $a, b \in [0,1]$. Hence,
$\overline{Z}(b)^\dag \overline{Z}(a)=M^\dag \mbox{diag}(\exp{\imath
\lambda_1 (b-a)},\dots,\exp{\imath \lambda_n (b-a)}) M$ and
therefore $V(\overline{Z}(b)^\dag \overline{Z}(a))=\sum_{j=1}^n
\cos(\lambda_j (b-a))$. Since the function $\gamma$:~$[0,1]
\rightarrow \mathbb{R}$ defined by $\gamma(\theta)=\sum_{j=1}^n
\cos(\lambda_j \theta)$, for all $\theta \in [0,1]$, is continuous
with $\gamma(0)=n$, there exists $\nu > 0$ such that
$\gamma(\theta) > \delta$ in case $|\theta| < \nu$, for all $\theta \in [0,1]$
(indeed, choose $\epsilon = n - \delta > 0$). Hence,
$V(\overline{Z}(b)^\dag \overline{Z}(a)) > \delta$ whenever $|b-a| < \nu$, for all $a,b \in [0,1]$,
and there exists a non-zero $\eta \in \mathbb{N}$ such that, for all
$N \geq \eta$,
\begin{equation}\label{ic}
    V(\overline{Z}_{\ell+1}^\dag \overline{Z}_\ell)=\sum_{j=1}^n \cos(\lambda_j \Delta) > \delta, 
\end{equation}
for all $0 \leq \ell \leq N-1$, where $\overline{Z}_{\ell}=\overline{Z}(\theta_{\ell})$, $\theta_\ell=\ell \Delta$,
for every $0 \leq \ell \leq N$, with $\Delta=1/N$. Note that $\overline{Z}_0=
\overline{Z}(0)=X_\infty$ and
$\overline{Z}_N=\overline{Z}(1)=I$.
Let $N \geq \eta$ and consider the continuous function
$\beta$:~$M^n \times M^n \rightarrow \mathbb{R}$ defined by
$\beta(X,Y)=V(Y^\dag X)$, for all $(X,Y) \in M^n \times M^n$. Since
$\mbox{SU}(n) \times \mbox{SU}(n)$ is compact, $\beta|(\mbox{SU}(n)
\times \mbox{SU}(n))$ is uniformly continuous. Therefore, by (\ref{ic}), there exists $\mu > 0$ such that, for all $X \in
\mbox{SU}(n)$ and $0 \leq \ell \leq N-1$, we have
\begin{equation}\label{nic}
    \| X - \overline{Z}_\ell \| < \mu \Rightarrow V(\overline{Z}_{\ell+1}^\dag X) > \delta 
\end{equation}
(indeed, choose $\epsilon = \sum_{j=1}^n \cos(\lambda_j \Delta) -
\delta > 0$ and consider the sup norm on $M^n \times M^n$). The aforementioned algorithm is described below. Recall that $\overline{Z}_0=X_\infty$ and
$\overline{Z}_N=I$.

\begin{algorithm}
Let $X_\infty \in \mbox{SU}(n)$. Choose any non-zero $N \in \mathbb{N}$ in a manner that (\ref{ic}) holds. Define $T_1 = 0$ and $W_0(T_1)=I$.
For every $1 \leq \ell \leq N-1$,
choose a real number $T_{\ell + 1} > T_{\ell}$ such that $V(\overline{Z}_{\ell+1}^\dag \overline{Z}_{\ell}
W_{\ell}(T_{\ell+1})) > \delta$, where $W_\ell$:~$\mathbb{R} \rightarrow \mbox{SU}(n)$ is the
solution of the auxiliar system (\ref{as})--(\ref{fd}) with initial condition
$W_{\ell}(T_{\ell})=\overline{Z}_{\ell}^\dag \overline{Z}_{\ell-1}W_{\ell-1}(T_{\ell}) \in \mbox{SU}(n)$. Define
\begin{align*}
    Z(t) &= \overline{Z}_{\ell} W_{\ell}(t) \in \mbox{SU}(n), \, \hspace{57pt} \mbox{for } t \in [T_{\ell}, T_{\ell + 1}), \\
    v_k(t) &= f_k a_k(t,W_{\ell}(t)) \in \mathbb{R}, \, 1 \leq k \leq m, \, \mbox{for } t \in [T_{\ell}, T_{\ell + 1}).
\end{align*}
If $T_N > T_{N-1}$ has been chosen as above,  define
\begin{align*}
    Z(t) &= W_{N}(t) \in \mbox{SU}(n), &\mbox{ for } t \geq T_{N}, \\
    v_k(t) &= f_k a_k(t,W_{N}(t)) \in \mathbb{R}, \hspace{4pt} 1 \leq k \leq m, &\mbox{ for } t \geq T_{N},
\end{align*}
where $W_N$:~$\mathbb{R} \rightarrow \mbox{SU}(n)$ is the
solution of (\ref{as})--(\ref{fd}) with initial condition
$W_{N}(T_{N})=\overline{Z}_{N-1}W_{N-1}(T_{N}) \in \mbox{SU}(n)$. \hspace{26pt} $\blacksquare$
\end{algorithm}

Some remarks are in order. First of all, from the reasoning preceding Algorithm~1, we know
that there always exists some non-zero $N \in \mathbb{N}$ such that (\ref{ic}) is true. Furthermore, Theorem~\ref{dct} and property (\ref{nic}) assure that
$T_{\ell + 1} > T_{\ell}$ can always be chosen as required in the algorithm, for every $1 \leq \ell \leq N-1$. It is also clear
that $(Z(t), (v_1(t), \dots, v_m(t)))$, for $t \in \mathbb{R}_+$, determined by the algorithm is a continuous solution of the ``closed-loop'' system (\ref{clqs}).
Indeed, compare (\ref{clqs}) with (\ref{as})--(\ref{fd}).
Finally, since $V(\overline{Z}_{N}^\dag \overline{Z}_{N-1}
W_{N-1}(T_{N}))=V(\overline{Z}_{N-1}W_{N-1}(T_{N})) > \delta$, Theorem~\ref{dct} implies that $\lim_{t \rightarrow \infty} W_N(t)=I$. However, $Z(t)=W_N(t)$, for $t \geq T_N$. Therefore,
the continuous functions $v_k$:~$\mathbb{R}_+ \rightarrow
\mathbb{R}$ determined by Algorithm~1 are such that (\ref{clsc}) is satisfied. We have thus shown our main result:

\begin{theorem}\label{sat}
Assume that system (\ref{cqs}) is regular, in the sense of Definition~\ref{rsd}. Given
$X_{\infty} \in \mbox{SU}(n)$, $T > 0$ and ``feedback gains'' $f_k^2 > 0$, consider $X_r^T$:~$\mathbb{R} \rightarrow \mbox{SU}(n)$ and
$u_k^T$:~$\mathbb{R} \rightarrow \mathbb{R}$ as in Definition~\ref{rsd}, for $1 \leq k \leq m$. Define $X_r=X_r^T X_\infty$. Then, there exist continuous
open-loop controls $u_k$:~$\mathbb{R}_+ \rightarrow \nolinebreak \mathbb{R}$, for $1 \leq k \leq m$, such that (\ref{tec}) is satisfied.
In other words, the periodic motion planning problem always has a solution when (\ref{cqs}) is regular. More precisely, if
$V(X_\infty) > \delta$, where $\delta$ is as in Theorem~\ref{dct}, then the smooth open-loop controls $u_k(t) = u_k^T(t) - f_k^2 \mathrm{V}(X^\dag(t) H_k X_r(t))$, obtained by numerical integration,
for $t \in \mathbb{R}_+$, $1 \leq k \leq m$, assure that (\ref{tec}) holds.
Otherwise, in case $V(X_\infty) \leq \delta$, then by following Algorithm~1 we determine continuous functions $v_k$:~$\mathbb{R}_+ \rightarrow \mathbb{R}$, for $1 \leq k \leq m$, such
that the corresponding continuous open-loop controls $u_k(t) = u_k^T(t) - v_k(t)$, for $t \in \mathbb{R}_+$, assure that (\ref{tec}) is satisfied.
\end{theorem}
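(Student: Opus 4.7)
The plan is to reduce the tracking problem to showing convergence in the coordinates $Z=X^\dag X_r$. Since $X,X_r\in\mbox{SU}(n)$, the tracking error $X-X_r$ tends to zero if and only if $Z(t)\to I$, so it suffices to produce continuous $v_k$ such that (\ref{clsc}) holds, after which the stated $u_k = u_k^T - v_k$ are continuous as the difference of continuous functions. The natural choice is $v_k = f_k a_k(t,Z)$, which is exactly what turns the left-invariant closed loop (\ref{clqs}) into the auxiliary system (\ref{as})--(\ref{fd}); convergence of the latter is precisely the content of Theorem~\ref{dct}, provided the initial value of $V$ exceeds the critical threshold $\delta$.

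First I would dispose of the \emph{easy case} $V(X_\infty)>\delta$: apply Theorem~\ref{dct} with $q=(0,X_\infty)$ to the auxiliary system, note that the resulting solution coincides with the solution $Z$ of (\ref{clqs}) under the proposed feedback, and conclude $Z(t)\to I$. The corresponding open-loop controls $u_k(t)=u_k^T(t)-f_k^2\,V(X^\dag(t)H_kX_r(t))$ are smooth since the right-hand side of (\ref{cqs}) depends smoothly on $X$ and $t$.

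The substantive part is the \emph{critical case} $V(X_\infty)\le\delta$, handled by Algorithm~1. Here I would diagonalize $X_\infty=M^\dag\mbox{diag}(e^{\imath\lambda_1},\dots,e^{\imath\lambda_n})M$ and connect $X_\infty$ to $I$ through the smooth path $\overline{Z}(\theta)$. The key quantitative observation is that $V(\overline{Z}(b)^\dag\overline{Z}(a))=\sum_j\cos(\lambda_j(b-a))$ depends only on $b-a$ and tends to $n>\delta$ as $b-a\to 0$; hence for $N$ large enough and $\Delta=1/N$ the inequality (\ref{ic}) holds along the sampled path $\overline{Z}_\ell=\overline{Z}(\ell\Delta)$. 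Uniform continuity of $\beta(X,Y)=V(Y^\dag X)$ on the compact set $\mbox{SU}(n)\times\mbox{SU}(n)$ then upgrades (\ref{ic}) to the robust version (\ref{nic}): any $X$ sufficiently close to $\overline{Z}_\ell$ still satisfies $V(\overline{Z}_{\ell+1}^\dag X)>\delta$.

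With these two ingredients in hand, I would proceed inductively along $\ell=1,\dots,N-1$. Assume $W_{\ell-1}(T_\ell)$ has been obtained so that $\overline{Z}_{\ell}^\dag\overline{Z}_{\ell-1}W_{\ell-1}(T_\ell)$ lies in $\mbox{SU}(n)$ with $V$-value above $\delta$; launch $W_\ell$ from this initial condition. By Theorem~\ref{dct}, $W_\ell(t)\to I$, so $\overline{Z}_{\ell-1}W_{\ell-1}(T_\ell)W_\ell(t)$ approaches $\overline{Z}_{\ell}$; combined with (\ref{nic}) this guarantees the existence of a finite $T_{\ell+1}>T_\ell$ with $\|\overline{Z}_\ell W_\ell(T_{\ell+1})-\overline{Z}_\ell\|<\mu$, hence $V(\overline{Z}_{\ell+1}^\dag\overline{Z}_\ell W_\ell(T_{\ell+1}))>\delta$ — this is the step that makes the algorithm well-defined, and I expect it to be the main obstacle to verify cleanly, since one must also check that the spliced trajectory $Z(t)=\overline{Z}_\ell W_\ell(t)$ on $[T_\ell,T_{\ell+1})$ is left-continuous at each $T_{\ell+1}$ and solves (\ref{clqs}) there (this follows because left-translation by $\overline{Z}_\ell$ is an isometry of $\mbox{SU}(n)$ that preserves the left-invariant dynamics, and because the definitions of $Z(T_{\ell+1})$ from the $\ell$th and $(\ell{+}1)$st pieces agree). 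Finally, on the terminal interval $t\ge T_N$ one has $\overline{Z}_N=I$, so $V(\overline{Z}_{N-1}W_{N-1}(T_N))>\delta$ triggers Theorem~\ref{dct} one last time, giving $W_N(t)\to I$ and hence (\ref{clsc}). Continuity of the resulting $v_k$ follows from continuity of each piece together with the matching conditions at the splice points, completing the proof.
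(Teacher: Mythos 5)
Your proposal mirrors the paper's own argument step for step: reduction to the left-invariant error $Z=X^\dag X_r$, the direct application of Theorem~\ref{dct} in the easy case $V(X_\infty)>\delta$, and in the critical case the diagonalization of $X_\infty$, the interpolating path $\overline{Z}(\theta)$, the estimates (\ref{ic}) and (\ref{nic}), and the iterated use of Theorem~\ref{dct} along Algorithm~1, together with the observation that left-translation of each auxiliary solution yields a continuous solution of (\ref{clqs}) because the spliced pieces agree at the break points. The only flaw is a small slip when you write ``$\overline{Z}_{\ell-1}W_{\ell-1}(T_\ell)W_\ell(t)$ approaches $\overline{Z}_\ell$'': since $\overline{Z}_{\ell-1}W_{\ell-1}(T_\ell)=\overline{Z}_\ell W_\ell(T_\ell)$, that product tends to $\overline{Z}_\ell W_\ell(T_\ell)$, not $\overline{Z}_\ell$; the quantity that actually converges to $\overline{Z}_\ell$ is $\overline{Z}_\ell W_\ell(t)$, which is what you use (correctly) in the very next clause.
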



\section{QUANTUM MECHANICAL EXAMPLE}

After some approximations, an appropriate change of coordinates,
scalings and simplifications, a controlled quantum system consisting
of two coupled spin-$\frac{1}{2}$ particles with Heisenberg
interaction and driven by an external electromagnetic field, can be
modeled as \cite{Ale08}
\begin{equation}\label{tcsps}
    \dot{Y} = (D + D_x u_x + D_y u_y + D_z u_z) Y, \hspace{12pt} Y(0)=I,
\end{equation}
where $Y \in M^4$ ($n=4$), the controls $u_x$, $u_y$, $u_z \in
\mathbb{R}$ are the $x$, $y$, $z$ components of the electromagnetic
field, respectively, $D = \mbox{diag}(3\imath, -\imath, -\imath,
-\imath)$, $D_x = H^R_{14} - 3 H^R_{23}$, $D_y = H^R_{13} + 3
H^R_{24}$, $D_z = H^R_{12} - 3 H^R_{34} \in \mathfrak{su}(4)$, and
$H^R_{ij}=(h^{R,ij}_{k\ell})$, $H^I_{ij}=(h^{I,ij}_{k\ell}) \in
\mathfrak{su}(4)$ are the matrices with entries
\begin{align*}
        & h^{R,ij}_{ij} = 1, \hspace{4pt} h^{R,ij}_{ji} = -1, \hspace{4pt} h^{R,ij}_{k\ell}=0, \hspace{12pt} \mbox{ for } k, \ell \neq i, j,  \medskip \\
        & h^{I,ij}_{ij} = h^{I,ij}_{ji} = \imath, \hspace{31pt} h^{I,ij}_{k\ell}=0, \hspace{14pt} \mbox{ for } k, \ell \neq i, j,
\end{align*}
respectively, for all $1 \leq i < j \leq n$.

Now, in order to remove the drift term $DY$ in (\ref{tcsps}), we
define, as usual, the time-dependent change of coordinates
$X=\Phi(t,Y)=e^{- D t} Y$, for all $(t,Y) \in \mathbb{R} \times M^4$.
In these coordinates, (\ref{tcsps}) is described as\footnote{In
quantum mechanics, this description is usually called the
interaction picture or interaction representation.}
\begin{equation}\label{ipd}
    \dot{X} = (C_x u_x + C_y u_y + C_z u_z)X,  \hspace{12pt} X(0)=I,
\end{equation}
where
\begin{equation*}
    \begin{array}{l}
    C_x = e^{-Dt} D_x e^{Dt} = \left(
            \begin{array}{rrrl}
               0 & 0 &  0 & e^{-\imath 4 t} \\
               0 & 0 & -3 & 0 \\
               0 & 3 &  0 & 0 \\
              -e^{\imath 4 t} & 0 &  0 & 0 \\
            \end{array}
          \right), \medskip \\
    C_y = e^{-Dt} D_y e^{Dt} = \left(
            \begin{array}{rrlr}
               0 &  0 & e^{- \imath 4 t} & 0 \\
               0 &  0 & 0 & 3 \\
              -e^{\imath 4 t} &  0 & 0 & 0 \\
               0 & -3 & 0 & 0 \\
            \end{array}
          \right), \medskip \\
    C_z = e^{-Dt} D_z e^{Dt} = \left(
            \begin{array}{rlrr}
               0 & e^{- \imath 4 t} & 0 &  0 \\
              -e^{\imath 4 t} & 0 & 0 &  0 \\
               0 & 0 & 0 & -3 \\
               0 & 0 & 3 &  0 \\
            \end{array}
          \right),
    \end{array}
\end{equation*}
for all $t \in \mathbb{R}$. We choose the real controls $u_x$,
$u_y$, $u_z$ as
\begin{equation}\label{rc}
    \begin{array}{l}
        u_x = (u_1 + \imath u_2) e^{\imath 4 t} + (u_1 - \imath u_2) e^{-\imath 4 t}, \medskip \\
        u_y = (u_3 + \imath u_4) e^{\imath 4 t} + (u_3 - \imath u_4) e^{-\imath 4 t}, \medskip \\
        u_z = (u_5 + \imath u_6) e^{\imath 4 t} + (u_5 - \imath u_6) e^{-\imath 4 t},
    \end{array}
\end{equation}
respectively, for all $t \in \mathbb{R}$, where $u_1, \dots, u_6 \in \mathbb{R}$
are the new controls. By applying the rotating wave
approximation (RWA) (see e.g. \cite{Rou08eng},
\cite{Ale08}, \cite{HarRai06}) to system
(\ref{ipd})--(\ref{rc}), which consists in considering only the
terms that are time-independent and in disregarding all the oscillating
ones, we obtain the following time-independent driftless system
\begin{equation}\label{rwad}
    \dot{X} = (u_1 H_{14}^R + u_2 H_{14}^I + u_3 H_{13}^R + u_4 H_{13}^I + u_5 H_{12}^R + u_6 H_{12}^I )X,
\end{equation}
with initial condition $X(0)=I$. It is straightforward to verify
that $\mbox{Lie}(\{ H_{14}^R, H_{14}^I, H_{13}^R, H_{13}^I,
H_{12}^R, H_{12}^I \})=\mathfrak{su}(4)$, i.e. the system is
controllable on $\mbox{SU}(4)$. Hence, Coron's Return Method implies that the system
is regular (see Remark~\ref{crm}) and therefore Theorem~\ref{sat} can be applied.
We choose $T=1$ and as goal state the C--NOT (Controlled-Not)
gate
\begin{equation*}
    X_\infty =
    \displaystyle
    \left(
    \begin{array}{rrrr}
        1 & 0 & 0 & 0 \\
        0 & 1 & 0 & 0 \\
        0 & 0 & 0 & -1 \\
        0 & 0 & 1 & 0 \\
    \end{array}
    \right) \in \mbox{SU}(4),
\end{equation*}
which is one of the universal gates and has great importance in
quantum information theory \cite{NieChu00}, \cite{HarRai06}. It is easy to see from the proof of
Theorem~\ref{dct} that $G=\{-4, 0, 4\}$ with $\delta=0$. Since $V(X_\infty) = 2 > 0$, Theorem~\ref{sat} implies that the smooth open-loop controls
$u_k(t) = u_k^1(t) - f_k^2 \mathrm{V}(X^\dag(t) H_k X_r(t)) = u_k^1(t) - v_k(t)$, for $t \in \mathbb{R}_+$, $1 \leq k \leq 6$, obtained by numerical integration,
assure that $\lim_{t \rightarrow \infty} [X(t) - X_r(t)] = 0$, for any ``feedback gains'' $f_k^2 > 0$.
Here, $u_k^1 = u_k^T$ with $T=1$, and $H_1=H_{14}^R$, $H_2=H_{14}^I$, $H_3=H_{13}^R$, $H_4=H_{13}^I$, $H_5=H_{12}^R$, $H_6=H_{12}^I$.
However, the periodic functions $u_1^1,
\dots, u_6^1$ are not known explicitly. Coron's Return
Method only establishes their existence. Fortunately, for system (\ref{rwad}), symbolic computation software packages have shown
that if we define them as $u_k^1(t) = \sum_{\ell=1}^{n_f} a_{k\ell} \sin(2 \pi \ell t)$, for $t \in \mathbb{R}$, $1 \leq k \leq 6$,
with $n_f > 1$ and where $a_{k\ell} \in \mathbb{R}$ are randomly chosen
from the uniform distribution on the interval $[-a, a]$ with
``sufficiently large'' $a > 0$, then it is ``very likely''
that $\mbox{dim}(\mbox{span} \{ B^j_k(0), 1 \leq k \leq 6, 0 \leq j \leq 6 \})$ = 15, that is,
(\ref{rss}) holds (recall that $\mbox{dim}(\mathfrak{su}(4))=15$). And, when (\ref{rss}) is true, it
follows that $\lim_{t \rightarrow \infty} [X(t) - X_r(t)]$, where $X_r = X^1_r X_\infty$.
We remark that since $u_k^1$ is an odd periodic function with period $T=1$, the solution $X^1_r$:~$\mathbb{R} \rightarrow \mbox{SU}(n)$ in Definition~\ref{rsd} is also periodic with period $T=1$.
Note that $a$ and $n_f$ determine the ``excitation level'' of $u_k^1$. For $f_k=1$, computer simulations have suggested that as $a$ and $n_f$ get larger,
the faster the convergence of the tracking error $X - X_r$ to zero (assuming that $\mbox{dim}(\mbox{span}\{ B^k_j(0) \}) = 15$, of course).


The obtained simulation results are now presented for $f_k=1$, $a=n_f=5$ and $a_{k\ell}$ having as values the corresponding
entries of the matrix $\overline{A}=(a_{k\ell})$ below
\begin{equation*}
    \overline{A} =
    \left(
    \begin{array}{rrrrr}
        -2.00 & -1.39 &  4.66 &  4.31 &  1.80 \\
        -0.31 & -1.54 &  0.92 & -3.20 & -2.18 \\
        -4.69 & -0.31 &  1.75 &  3.94 & -1.11 \\
        -2.79 &  0.77 &  4.09 &  2.34 &  3.46 \\
         2.19 &  0.60 & -0.27 &  0.43 & -3.75 \\
        -0.18 & -4.44 & -1.38 & -4.58 &  2.59 \\
    \end{array}
    \right).
\end{equation*}
With these choices, we have indeed verified that $\mbox{dim}(\mbox{span}\{ B^k_j(0) \}) = 15$.
Figure~1 exhibits the convergence of $\| X - X_r \|$ to
zero (Euclidean norm on $M^4$). We see that the norm of the tracking error is non-increasing. In
Figure~2, the controls $u_1, u_2$ (top) and the ``feedbacks'' $v_1,
v_2$ (bottom) on the time interval $[0, 10]$ are shown. Notice that
$v_k$ is relatively small in comparison with the
control $u_k$, for $k=1,2$. Therefore, the control $u_k$ is
relatively close to $u_k^1$ as defined above, for $k=1,2$. In order
not to overwhelm the presentation, we have chosen not to exhibit $u_k, v_k$,
for $3 \leq k \leq 6$. They have, however, a similar behavior and a
similar order of magnitude as for $k=1,2$.

\begin{figure}[htbp]
    \scalebox{0.25}{
    \includegraphics{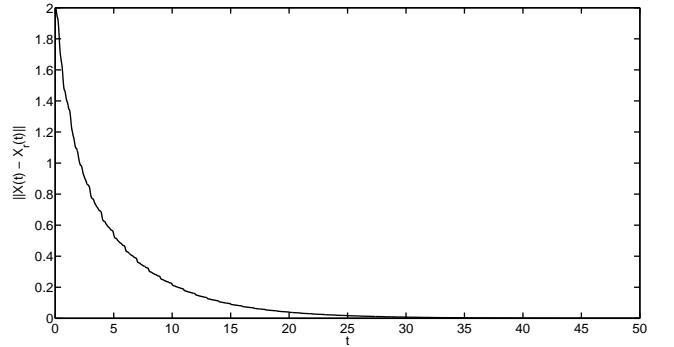}}
    \caption{Convergence of the norm of the tracking error to zero.}
\end{figure}

\begin{figure}[htbp]
    \hspace{-22pt}
    \scalebox{0.30}{
    \includegraphics{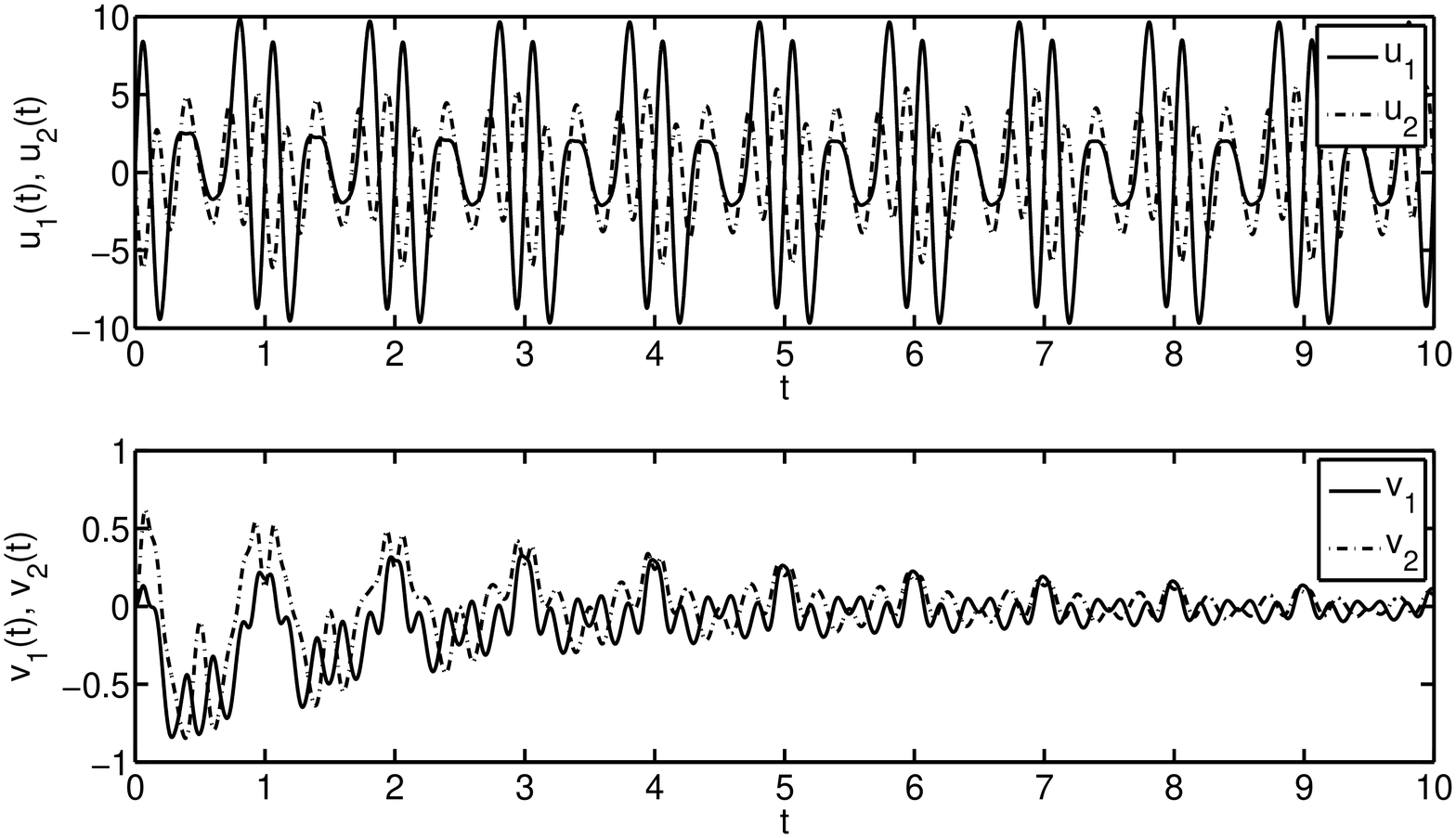}}
    \caption{Controls $u_1, u_2$ and ``feedbacks'' $v_1, v_2$ on the interval $[0,10]$.}
\end{figure}

\section{CONCLUDING REMARKS}

In the solution here presented for the periodic motion planning problem,
the only needed assumption is that system (\ref{cqs}) is
regular, which requires that the periodic functions $u_k^T$ satisfying (\ref{rss}) are
explicitly known. Nevertheless, this will hardly be the case in general.
For this reason, currently under investigation is
the explicit determination of $u_k^r$ in (\ref{rqs}) in a manner
that Theorem~\ref{dct} still holds under assumptions other than the
regularity of system (\ref{cqs}).

\section{ACKNOWLEDGMENTS}

The authors would like to thank Jean-Michel Coron and Mazyar
Mirrahimi for valuable discussions and suggestions.

\appendix

\section{Proof of Lemma~\ref{edcl-p}}

In order to prove Theorem~\ref{dct}, we need first a few
intermediate definitions and results. For simplicity,
we consider throughout this section that $q=(t_0,W_{t_0}) \in \mathbb{R} \times \mbox{SU}(n)$ is fixed and that
$W_q$:~$\mathbb{R} \rightarrow \mbox{SU}(n)$ denotes the solution of the auxiliar system
(\ref{as})--(\ref{fd}) with initial condition $W_q(t_0)=W_{t_0}$.

\begin{definition}\cite{Vidyasagar93}
A point
$\overline{W} \in M^n$ is called a \emph{limit point} of $W_q$ if
there exists a real sequence $\{ t_m \}$ such that $\lim_{m
\rightarrow \infty} t_m = \infty$ and $\lim_{m \rightarrow \infty}
W_q(t_m) = \overline{W}$. The set of all limit points of $W_q$ is
called the \emph{limit set} of $W_q$ and is denoted by $\Omega(W_q)$.
\end{definition}

\begin{remark}\label{lsp}
Since $\mbox{SU}(n)$ is a compact subset of $M^n$, it is clear that $\Omega(W_q)$
is a non-empty subset of $\mbox{SU}(n)$.
\end{remark}


\begin{proposition}\cite{Vidyasagar93}\label{cls}
$\lim_{t \rightarrow \infty} d(W_q(t),\Omega(W_q)) = 0$.
\end{proposition}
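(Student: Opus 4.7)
The plan is to argue by contradiction, exploiting the compactness of $\mbox{SU}(n)$ that has already been invoked in Remark~\ref{lsp}. Assume the conclusion fails, i.e.\ $d(W_q(t),\Omega(W_q)) \not\to 0$ as $t \to \infty$. Then there exist some $\epsilon > 0$ and a real sequence $\{t_m\}$ with $\lim_{m \to \infty} t_m = \infty$ and
\[
    d(W_q(t_m),\Omega(W_q)) \geq \epsilon \quad \text{for all } m.
\]

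Next I would extract a convergent subsequence. Since the solution $W_q$ takes values in $\mbox{SU}(n)$, the sequence $\{W_q(t_m)\}$ lies in this compact subset of $M^n$, so by Bolzano--Weierstrass there exists a subsequence $\{t_{m_k}\}$ such that $W_q(t_{m_k}) \to \overline{W}$ for some $\overline{W} \in \mbox{SU}(n)$. Because $t_{m_k} \to \infty$, the definition of limit point (Definition in the Appendix) yields $\overline{W} \in \Omega(W_q)$.

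This produces the desired contradiction: from $W_q(t_{m_k}) \to \overline{W} \in \Omega(W_q)$ we get
\[
    d(W_q(t_{m_k}),\Omega(W_q)) \leq \|W_q(t_{m_k}) - \overline{W}\| \longrightarrow 0,
\]
which is incompatible with $d(W_q(t_m),\Omega(W_q)) \geq \epsilon > 0$ for every $m$ (hence for every $m_k$). Therefore $d(W_q(t),\Omega(W_q)) \to 0$ as claimed.

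There is no serious obstacle here: the entire argument rests on the compactness of $\mbox{SU}(n)$, which guarantees both that $\Omega(W_q)$ is non-empty (as already observed in Remark~\ref{lsp}) and that one can extract a convergent subsequence from any sequence $\{W_q(t_m)\}$. The only point that deserves mention is that the distance $d(\cdot,\Omega(W_q))$ is continuous in its first argument (being the infimum of continuous functions), so convergence of $W_q(t_{m_k})$ to a point of $\Omega(W_q)$ really does force the distances to go to zero along the subsequence.
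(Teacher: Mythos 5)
Your proof is correct. The paper does not supply its own argument here -- Proposition~\ref{cls} is simply cited from Vidyasagar's \emph{Nonlinear Systems Analysis} with no proof given in the text -- so there is nothing in the paper to compare against. Your contradiction argument via compactness of $\mbox{SU}(n)$ and Bolzano--Weierstrass is the standard proof of this fact (it is exactly how it is established for bounded trajectories in the reference), and every step checks out: the extracted subsequence limit $\overline{W}$ belongs to $\Omega(W_q)$ by definition, and $d(W_q(t_{m_k}),\Omega(W_q)) \leq \|W_q(t_{m_k}) - \overline{W}\| \to 0$ contradicts the assumed $\epsilon$-separation. The only stylistic remark is that the final paragraph's appeal to continuity of $d(\cdot,\Omega(W_q))$ is not actually needed, since you already have the explicit bound by $\|W_q(t_{m_k}) - \overline{W}\|$; that inequality alone closes the argument.
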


The next $2$ lemmas are essential in the proof of the important
convergence result of Theorem~\ref{gct} given below, which was inspired in the
periodic version of LaSalle's invariance principle presented in
\cite{Vidyasagar93} and in the \emph{ad-condition} stabilization method of
\cite{JurQui78}.

\begin{lemma}\label{cips}
Let $W$:~$\mathbb{R} \rightarrow M^n$ be a continuously
differentiable mapping such that $\lim_{t \rightarrow \infty}
\dot{W}(t) = 0$. Suppose that $\{ t_m \}$ is a real sequence such
that $\lim_{m \rightarrow \infty} t_m = \infty$ and $\lim_{m
\rightarrow \infty} W(t_m) = \overline{W}$. Then, for every
$\epsilon \in \mathbb{R}$, we have that $\lim_{m \rightarrow \infty}
W(t_m + \epsilon) = \overline{W}$.
\end{lemma}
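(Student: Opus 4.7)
The plan is to rewrite the difference $W(t_m+\epsilon)-W(t_m)$ via the fundamental theorem of calculus and then estimate it using the hypothesis $\dot{W}(t)\to 0$. Since $W$ is continuously differentiable, I would start from
$$W(t_m+\epsilon) - W(t_m) = \int_{t_m}^{t_m+\epsilon} \dot{W}(s)\,ds,$$
so that the triangle inequality yields
$$\|W(t_m+\epsilon)-\overline{W}\| \leq \|W(t_m)-\overline{W}\| + \left\|\int_{t_m}^{t_m+\epsilon}\dot{W}(s)\,ds\right\|.$$
Both terms on the right-hand side can then be driven to zero as $m\to\infty$.

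For the first term this is immediate from the hypothesis $W(t_m)\to\overline{W}$. For the second term, given $\eta>0$, the assumption $\lim_{t\to\infty}\dot{W}(t)=0$ supplies an $M>0$ with $\|\dot{W}(s)\|<\eta/(|\epsilon|+1)$ whenever $s\geq M$. Since $t_m\to\infty$, for all sufficiently large $m$ one has $t_m-|\epsilon|\geq M$, which guarantees that the whole segment of integration --- whether traversed forwards ($\epsilon\geq 0$) or backwards ($\epsilon<0$) --- is contained in $[M,\infty)$. A direct bound on the integral then yields
$$\left\|\int_{t_m}^{t_m+\epsilon}\dot{W}(s)\,ds\right\|\leq |\epsilon|\cdot\frac{\eta}{|\epsilon|+1}<\eta,$$
and combining with $\|W(t_m)-\overline{W}\|<\eta$ for large $m$ gives $\|W(t_m+\epsilon)-\overline{W}\|<2\eta$. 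Since $\eta>0$ is arbitrary, the conclusion follows.

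The argument is essentially a standard $\eta$--$N$ exercise; the only mild subtlety is handling the orientation of the integral when $\epsilon<0$, which is why I work with $|\epsilon|$ throughout and require $t_m-|\epsilon|\geq M$ rather than just $t_m\geq M$. I expect no real obstacle here: the lemma makes no use of the specific structure of the auxiliar system~(\ref{as})--(\ref{fd}) and is purely a statement about $C^1$ curves in $M^n$ whose derivative vanishes at infinity. The genuine work of the appendix will presumably lie in the subsequent step, namely, in using this lemma together with the fact that $\dot{\mathrm{V}}(W_q)=\sum_k a_k(t,W_q)^2$ is integrable to conclude that every limit point $\overline{W}\in\Omega(W_q)$ lies in the invariance-like set where all $a_k$ vanish, and then in extracting the algebraic identity (\ref{rss}) to characterise this set.
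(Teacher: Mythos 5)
Your proof is correct and is essentially identical to the paper's own argument: both write $W(t_m+\epsilon)-W(t_m)=\int_{t_m}^{t_m+\epsilon}\dot{W}(s)\,ds$, bound the integral by $|\epsilon|\sup\|\dot{W}\|$ over the segment, and invoke $\dot{W}(t)\to 0$ together with $t_m\to\infty$. The only difference is cosmetic: you spell out the $\eta$--$M$ bookkeeping (including the $|\epsilon|+1$ trick and the $t_m-|\epsilon|\ge M$ condition to cover $\epsilon<0$), whereas the paper states the bound for $\epsilon\ge 0$ and dismisses the negative case as analogous.
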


\begin{proof}
Let $\epsilon \geq 0$ and $m \in \mathbb{N}$. We have that $W(t_m +
\epsilon) - W(t_m) = \int_{t_m}^{t_m + \epsilon} \dot{W}(t) \, dt.$
Thus, the inequality $\| W(t_m + \epsilon) - W(t_m) \| \leq |
\epsilon | \sup_{t \in [t_m , t_m + \epsilon ]}  \| \dot{W}(t) \|$
holds. The assumptions then imply that $\lim_{m \rightarrow \infty}
W(t_m + \epsilon) = \overline{W}$. For $\epsilon < 0$, we can
proceed in an analogous manner.
\end{proof}

\begin{lemma}\label{gl}
Consider that
$\overline{W} \in \Omega(W_q)$, $j \in \mathbb{N}$ and let $1
\leq k \leq m$. Assume that $\lim_{t \rightarrow \infty}
\dot{W_q}(t) = 0$ and that $\lim_{t \rightarrow \infty}
\mathrm{V}(W_q(t) X^{\dag}_r(t) B^j_k(t) X_\infty) = 0$, where
$B^j_k$ is as in (\ref{rss}). Then, $\mathrm{V}(\overline{W}
X^{\dag}_{\infty} B^j_k(0) X_\infty) = 0$.
\end{lemma}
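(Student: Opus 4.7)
My plan is to exploit the $T$-periodicity of $X_r^T$ and $B_k^j$ together with Lemma~\ref{cips} in order to shift an arbitrary limit sequence back to phase zero.

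First, since $\overline{W} \in \Omega(W_q)$, fix a real sequence $\{t_m\}$ with $t_m \to \infty$ and $W_q(t_m) \to \overline{W}$. Decompose $t_m = k_m T + s_m$ with $k_m \in \mathbb{N}$ and $s_m \in [0,T)$. By compactness of $[0,T]$, pass to a subsequence (still denoted $\{t_m\}$) such that $s_m \to s^\ast$ for some $s^\ast \in [0,T]$. Now apply Lemma~\ref{cips} with the fixed shift $\epsilon = -s^\ast$ — the hypothesis $\lim_{t \to \infty} \dot{W}_q(t) = 0$ is exactly what the lemma requires — to obtain $W_q(t_m - s^\ast) \to \overline{W}$.

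Second, set $\tau_m = t_m - s^\ast = k_m T + (s_m - s^\ast)$, so that $\tau_m \to \infty$ and $s_m - s^\ast \to 0$. Using the $T$-periodicity and continuity of $X_r^T$ and $B_k^j$,
\begin{align*}
    X_r^T(\tau_m) &= X_r^T(s_m - s^\ast) \to X_r^T(0) = I, \\
    B_k^j(\tau_m) &= B_k^j(s_m - s^\ast) \to B_k^j(0).
\end{align*}
Because $X_r = X_r^T X_\infty$, it follows that $X_r^\dag(\tau_m) \to X_\infty^\dag$. Since $V$ is linear (hence continuous) and matrix multiplication is continuous, taking limits yields
\[
    V\bigl(W_q(\tau_m) X_r^\dag(\tau_m) B_k^j(\tau_m) X_\infty\bigr) \longrightarrow V\bigl(\overline{W} X_\infty^\dag B_k^j(0) X_\infty\bigr).
\]

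Finally, the hypothesis $\lim_{t \to \infty} V(W_q(t) X_r^\dag(t) B_k^j(t) X_\infty) = 0$, together with $\tau_m \to \infty$, forces the left-hand side above to tend to $0$. Therefore $V(\overline{W} X_\infty^\dag B_k^j(0) X_\infty) = 0$, as required. The only subtle step is the choice of shift: without Lemma~\ref{cips} and the $-s^\ast$ correction, direct continuity would only give the identity at a generic phase $s^\ast \in [0,T]$; periodicity is what collapses everything back to $t=0$.
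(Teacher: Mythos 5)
Your proof is correct and follows essentially the same route as the paper's: extract a convergent subsequence of the fractional parts $s_m \in [0,T)$, shift the original sequence back by the limit phase $s^\ast$ using Lemma~\ref{cips} (which requires precisely the hypothesis $\dot W_q \to 0$), and then invoke $T$-periodicity and continuity of $X_r$, $B^j_k$ together with continuity of $V$ to pass to the limit. The only differences are notational (your $s^\ast,\tau_m$ versus the paper's $\theta, t^*_{m_i}$, and your explicit use of $X_r = X_r^T X_\infty$ where the paper simply evaluates $X_r$ at $0$).
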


\begin{proof}
Let $\overline{W} \in \Omega(W_q)$. By definition, there
exists a real sequence $\{ t_m \}$ such that $\lim_{m \rightarrow
\infty} t_m = \infty$ and $\lim_{m \rightarrow \infty} W_q(t_m) =
\overline{W}$. Now, for each $m \in \mathbb{N}$, there exists
$\ell_m \in \mathbb{Z}$ such that $s_m = t_m - \ell_m T \in [0,T)$,
where $T> 0$ is the period of $X_r$ and of $B^j_k$ (see
Remark~\ref{rss-p}). Since $[0,T]$ is compact, there exists a
subsequence $\{ s_{m_i} \}$ in which $\lim_{i \rightarrow \infty}
s_{m_i} = \theta \in [0,T]$. Let $\{ t_{m_i} \}$ be the
corresponding subsequence of $\{ t_{m} \}$. Define the sequences $\{
t^*_{m_i} \}$ and $\{ s^*_{m_i} \}$ as $t^*_{m_i} = t_{m_i} -
\theta$ and $s^*_{m_i} = s_{m_i} - \theta$, respectively. We have
that $\lim_{t \rightarrow \infty} \dot{W}_q(t) = 0$ as well as
$\lim_{t \rightarrow \infty} \mathrm{V}(W_q(t) X^{\dag}_r(t)
B^j_k(t) X_\infty) = 0$ (assumptions). Therefore, by definition,
$\lim_{i \rightarrow \infty} s^*_{m_i} = 0$, and Lemma~\ref{cips}
gives that $\lim_{i \rightarrow \infty} W_q(t^*_{m_i}) = \lim_{i
\rightarrow \infty} W_q(t_{m_i} - \theta) = \overline{W}$.
Hence,
the continuity and periodicity of $X_r$ and of $B^j_k$ imply that
$\lim_{i \rightarrow \infty} \mathrm{V}(W_q(t^*_{m_i})
X^{\dag}_r(t^*_{m_i}) B^j_k(t^*_{m_i}) X_\infty) = \lim_{i
\rightarrow \infty} \mathrm{V}(W_q(t^*_{m_i}) X^{\dag}_r(s^*_{m_i})
B^j_k(s^*_{m_i}) X_\infty) = \mathrm{V}(\overline{W} X^{\dag}_\infty
B^j_k(0) X_\infty)=0$.
\end{proof}

\begin{theorem}\label{gct}
Consider the subset $E = \{  W \in \mbox{SU}(n) : \mathrm{V}(W
X^{\dag}_{\infty} B^j_k(0) X_\infty) = 0$, for all $j \in
\mathbb{N}, 1 \leq k \leq m \}$,
where $B^j_k$ is as in (\ref{rss}).
Then, $\lim_{t \rightarrow
\infty} d(W_q(t),E) = 0$ and $E$ is non-empty.
\end{theorem}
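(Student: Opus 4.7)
The overall plan is to show that the limit set $\Omega(W_q)$ is contained in $E$. Once this containment is established, non-emptiness of $E$ follows from Remark~\ref{lsp} (which gives $\Omega(W_q)\neq\emptyset$), and the convergence $d(W_q(t),E)\to 0$ follows from Proposition~\ref{cls} together with $d(W_q(t),E)\le d(W_q(t),\Omega(W_q))$. To show $\Omega(W_q)\subset E$, I would apply Lemma~\ref{gl} to every pair $(j,k)$, which reduces the problem to verifying its two hypotheses: $\dot W_q(t)\to 0$ and $\mathrm{V}(W_q(t)X_r^\dag(t)B_k^j(t)X_\infty)\to 0$ for all $j\in\mathbb{N}$ and $1\le k\le m$.

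For the first hypothesis, I would exploit the monotonicity $\dot{\mathrm{V}}(t,W_q(t))=\sum_{k=1}^m a_k(t,W_q(t))^2\ge 0$ pointed out after equation (\ref{fd}). Since $W_q(t)\in\mathrm{SU}(n)$, the map $t\mapsto \mathrm{V}(W_q(t))$ is bounded and non-decreasing, hence convergent, so $\int_0^\infty\sum_k a_k(t,W_q(t))^2\,dt<\infty$. Boundedness of $W_q$, $\dot W_q$, $X_r$ and $H_k$ shows that $t\mapsto a_k(t,W_q(t))$ has a bounded derivative and is therefore uniformly continuous, so Barbalat's lemma yields $a_k(t,W_q(t))\to 0$ for each $k$, from which $\dot W_q(t)\to 0$ follows at once from (\ref{as}).

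For the second hypothesis, I would proceed by induction on $j$, setting $g_{j,k}(t)=\mathrm{V}(W_q(t)X_r^\dag(t)B_k^j(t)X_\infty)$. The base case $g_{0,k}(t)=\mathrm{V}(W_q(t)X_r^\dag(t)H_k X_r(t))=a_k(t,W_q(t))/f_k\to 0$ comes from the previous paragraph after using $X_r=X_r^T X_\infty$ and $B_k^0=H_kX_r^T$. For the inductive step, using $\dot X_r^\dag=-X_r^\dag A$ (since $A\in\mathfrak{su}(n)$) and the recursion $\dot B_k^j=B_k^{j+1}+AB_k^j$, the cross terms cancel and one obtains
\begin{equation*}
\dot g_{j,k}(t)=\Re\,\mathrm{tr}\bigl(\dot W_q(t)X_r^\dag(t)B_k^j(t)X_\infty\bigr)+g_{j+1,k}(t).
\end{equation*}
The first term on the right vanishes as $t\to\infty$ because $\dot W_q\to 0$ and $X_r^\dag B_k^j X_\infty$ is bounded by Remark~\ref{rss-p}, so it suffices to show $\dot g_{j,k}(t)\to 0$ and transfer the limit to $g_{j+1,k}$.

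The main obstacle is exactly this last step: passing from $g_{j,k}\to 0$ to $\dot g_{j,k}\to 0$. I would handle it by the standard Barbalat corollary which asserts that if $g_{j,k}\to 0$ and $\dot g_{j,k}$ is uniformly continuous, then $\dot g_{j,k}\to 0$ (applying Barbalat to $\dot g_{j,k}$ whose integral $g_{j,k}(t)-g_{j,k}(0)$ converges). Uniform continuity of $\dot g_{j,k}$ would be obtained by checking that $\ddot g_{j,k}$ is bounded, which in turn follows from boundedness of $W_q$, $\dot W_q$ and $\ddot W_q$, together with boundedness of $X_r$, $B_k^j$ and their first two time derivatives (periodicity and smoothness, Remark~\ref{rss-p}); the boundedness of $\ddot W_q$ itself is obtained by differentiating (\ref{as}) once and using that $a_k(t,W_q(t))$ has bounded derivative. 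Once the induction is complete, Lemma~\ref{gl} delivers $\mathrm{V}(\overline W X_\infty^\dag B_k^j(0)X_\infty)=0$ for every $\overline W\in\Omega(W_q)$, every $j$ and every $k$, i.e.\ $\Omega(W_q)\subset E$, which closes the argument.
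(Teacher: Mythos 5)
Your proposal is correct and follows essentially the same route as the paper: reduce to $\Omega(W_q)\subset E$ via Remark~\ref{lsp} and Proposition~\ref{cls}, establish the two hypotheses of Lemma~\ref{gl} by monotonicity of $V(W_q(t))$ plus Barbalat's lemma, and close the induction on $j$ via boundedness of second derivatives. The only differences are cosmetic: you invoke Barbalat in its integral form for the base case where the paper uses the derivative form, and you write $\dot g_{j,k}$ directly in terms of $\dot W_q$ while the paper substitutes the flow into $\dot b^j_k(t,W)$; both yield the same cancellation of the $A$-terms and the same conclusion.
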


\begin{proof}
Due to Proposition~\ref{cls}, it suffices to prove that the non-empty
limit set $\Omega(W_q)$ of the solution $W_q$ is contained
in the set $E$. We remark that since $\mathrm{V}$:~$M^n \rightarrow
\mathbb{R}$ is a continuous linear function, there exists $c > 0$
such that $| \mathrm{V}(X) | \leq c \| X \|$, for all $X \in M^n$.
Furthermore, it follows from (\ref{rqs}), (\ref{as})--(\ref{fd}),
Remark~\ref{rss-p} and the compactness of $\mbox{SU}(n)$ that each
of the mappings $X_r$, $X^\dag_r$, $W_q$, $B^j_k$, $\dot{X}_r$,
$\dot{X}^\dag_r$, $\dot{W}_q$, $\dot{B}^j_k$ is bounded, for every $j
\in \mathbb{N}$, $1 \leq k \leq m$.

Consider the functions $\alpha$:~$\mathbb{R} \rightarrow \mathbb{R}$,
$b^j_k$:~$\mathbb{R} \times M^n \rightarrow \mathbb{R}$,
$\beta^j_k$:~$\mathbb{R} \rightarrow \mathbb{R}$ defined
respectively as
\begin{equation*}
    \begin{array}{ll}
        \alpha(t) = \mathrm{V}(W_q(t)), & \mbox{for all } t \in \mathbb{R}, \\
        b^j_k(t,W) = \mathrm{V}(W X^{\dag}_r(t) B^j_k(t) X_\infty), & (t,W) \in \mathbb{R} \times M^n,\\
        \beta^j_k(t) = b^j_k(t,W_q(t)), & \mbox{for all } t \in \mathbb{R},
    \end{array}
\end{equation*}
for $j \in \mathbb{N}$, $1 \leq k \leq m$.
We will prove by induction that
\begin{equation}\label{cz-g}
    \lim_{t \rightarrow \infty} \beta^j_k(t)  = \lim_{t \rightarrow \infty} \mathrm{V}(W_q(t) X^{\dag}_r(t) B^j_k(t) X_\infty) = 0,
\end{equation}
for  $j \in \mathbb{N}$, $1 \leq k \leq m$. From (\ref{as})--(\ref{fd}) and the definition of $b^0_k$, we have that $\dot{\mathrm{V}}(t,W)=
\sum_{k=1}^{m} f_k^2 b^0_k(t,W)^2 \geq 0$ and
$\ddot{\mathrm{V}}(t,W) = 2 \sum_{k=1}^{m} f_k^2 \dot{b}^0_k(t,W) b^0_k(t,W)$,
where $\dot{b}^0_k(t,W) = \mathrm{V}(W X^{\dag}_r(t)
\sum_{\ell=1}^{m} f^2_\ell b^0_\ell(t,W) H_\ell B^0_k(t) X_\infty)
+ b^1_k(t,W)$
and $f_1,
\dots, f_m \in \mathbb{R}$ are non-zero, for $(t,W) \in \mathbb{R}
\times M^n$. Since $\dot{\mathrm{V}}$ is a non-negative function, we
conclude that $\alpha$ is a non-decreasing function bounded from
above such that $\ddot{\alpha}$ is bounded. Hence, $\lim_{t
\rightarrow \infty} \alpha(t)$ exists and is finite.
This relation along with Barbalat's Lemma (see e.g. \cite{Slotine91}) give that $\lim_{t \rightarrow \infty}
\dot{\alpha}(t) = \sum_{k=1}^{m} f_k^2 b^0_k(t,W_q(t))^2 = 0$. Thus,
$\lim_{t \rightarrow \infty} \beta^0_k(t) = \lim_{t \rightarrow
\infty} \mathrm{V}(W_q(t) X^{\dag}_r(t) B^0_k(t) X_\infty) = 0$, for
each $1 \leq k \leq m$, from which (\ref{as})--(\ref{fd}) imply that
\begin{equation}\label{cz-gct}
    \lim_{t \rightarrow \infty} \dot{W}_q(t) = 0.
\end{equation}
Now, consider the induction hypothesis
\begin{equation}\label{cz-g-ih}
    \lim_{t \rightarrow \infty} \beta^j_k(t) = \lim_{t \rightarrow \infty} \mathrm{V}(W_q(t) X^{\dag}_r(t) B^j_k(t) X_\infty) = 0,
\end{equation}
for some $j \in \mathbb{N}$ and all $1 \leq k \leq m$. We have that
$\dot{b}^j_k(t,W) = \mathrm{V}(W X^{\dag}_r(t) \sum_{\ell=1}^{m}
f^2_\ell b^0_\ell(t,W) H_\ell B^j_k(t) X_\infty) + b^{j+1}_k(t,W)$,
for all $1 \leq k \leq m$, $(t,W) \in \mathbb{R} \times M^n$.
Straightforward computations show that $\ddot{\beta}^j_k$ is
bounded because
$\ddot{\beta}^j_k(t)=\ddot{b}^j_k(t,W_q(t))$, for all $1 \leq k \leq
m$, $t \in \mathbb{R}$. Hence, (\ref{cz-g-ih}) and Barbalat's Lemma
imply that $\lim_{t \rightarrow \infty} \beta^{j+1}_k(t) = \lim_{t
\rightarrow \infty} \mathrm{V}(W_q(t) X^{\dag}_r(t) B^{j+1}_k(t)
X_\infty) = 0$, for $1 \leq k \leq m$. We have thus proved that
(\ref{cz-g}) is true. At this moment, it is simple to prove
that $\Omega(W_q) \subset E$. Indeed, assume that $\overline{W} \in \Omega(W_q) \subset \mbox{SU}(n)$.
Then, (\ref{cz-g}), (\ref{cz-gct}) and Lemma~\ref{gl} imply that
$\mathrm{V}(\overline{W} X^{\dag}_\infty B^j_k(0) X_\infty) = 0$, for
each $j \in \mathbb{N}$, $1 \leq k \leq m$.
\end{proof}

\begin{lemma}\label{edcl-p}
Consider the subset $F = \{ W \in \mbox{SU}(n) : V(W) = \sum_{i=1}^{n} \Re(\lambda_i), \mbox{ for some } \lambda_i \in \mathbb{C}
\mbox{ such that } | \lambda_i | = 1, \prod_{i=1}^{n} \lambda_i = 1, \Im(\lambda_1) = \dots = \Im(\lambda_n) \}$.
Then, $I \in F$ and $\lim_{t \rightarrow \infty} d(W_q(t),F) = 0$.
\end{lemma}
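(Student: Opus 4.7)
The plan is to deduce the conclusion from Theorem~\ref{gct} together with the regularity hypothesis of system (\ref{cqs}) encoded in Definition~\ref{rsd}, reducing everything to a linear-algebra characterisation of matrices $W \in \mbox{SU}(n)$ for which $\Re(tr(WM))$ vanishes on all of $\mathfrak{su}(n)$.

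First, $I \in F$ is immediate by choosing $\lambda_1 = \dots = \lambda_n = 1$. Next, Theorem~\ref{gct} gives $\lim_{t \to \infty} d(W_q(t),E) = 0$, so it suffices to show $E \subseteq F$. Fix $W \in E$. By the regularity assumption, $\mbox{span}\{B^j_k(0) : 1 \leq k \leq m, j \in \mathbb{N}\} = \mathfrak{su}(n)$, so the defining condition of $E$ together with the $\mathbb{R}$-linearity of $\mathrm{V}$ upgrades to $\mathrm{V}(W X^{\dag}_\infty N X_\infty) = 0$ for every $N \in \mathfrak{su}(n)$. Since conjugation by the unitary $X_\infty$ is an $\mathbb{R}$-linear bijection of $\mathfrak{su}(n)$ onto itself, this is equivalent to $\Re(tr(WM)) = 0$ for all $M \in \mathfrak{su}(n)$.

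The central computation is then to rewrite, for $M \in \mathfrak{su}(n)$,
\begin{equation*}
    \Re(tr(WM)) = \tfrac{1}{2}\bigl(tr(WM) + tr(M^\dag W^\dag)\bigr) = \tfrac{1}{2}\, tr\bigl((W - W^\dag)M\bigr),
\end{equation*}
using $M^\dag = -M$ and cyclicity. Write $W - W^\dag = cI + P'$ with $c = tr(W-W^\dag)/n = 2\imath\,\Im(tr(W))/n \in \imath\mathbb{R}$ and $P' \in \mathfrak{su}(n)$ (the traceless, skew-Hermitian part). Since $tr(M) = 0$, the scalar part drops out and the identity becomes $tr(P' M) = 0$ for every $M \in \mathfrak{su}(n)$. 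Setting $M = -P' \in \mathfrak{su}(n)$ gives $tr(P' P'^\dag) = \|P'\|^2 = 0$, hence $P' = 0$ and $W - W^\dag = cI$ with $c$ purely imaginary.

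It remains to translate this into the eigenvalue description of $F$. Writing $W = H + (c/2)I$ where $H := (W + W^\dag)/2$ is Hermitian, any eigenvalue of $W$ is of the form $\lambda_i = h_i + (c/2)$ with $h_i \in \mathbb{R}$; hence all $\lambda_i$ share the same imaginary part $s = \Im(tr(W))/n$. Unitarity of $W$ yields $|\lambda_i| = 1$ and $W \in \mbox{SU}(n)$ gives $\prod_i \lambda_i = \det(W) = 1$, while $V(W) = \Re(tr(W)) = tr(H) = \sum_i \Re(\lambda_i)$, so $W \in F$. Combining $E \subseteq F$ with Theorem~\ref{gct} concludes the proof. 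The only delicate step is the algebraic reduction showing $W - W^\dag \in \imath\mathbb{R}\cdot I$; everything else is essentially a bookkeeping exercise linking the regularity span condition to a duality statement on $\mathfrak{su}(n)$.
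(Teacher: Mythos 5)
Your proof is correct, and the central linear-algebra step is carried out differently from the paper. After the common reduction (using Theorem~\ref{gct} and the regularity/span condition to obtain $\Re(\mathrm{tr}(WM))=0$ for all $M\in\mathfrak{su}(n)$), the paper's proof first diagonalizes $W = M\,\mbox{diag}(\lambda_1,\dots,\lambda_n)\,M^\dag$, transports the vanishing condition via the isomorphism $Y\mapsto (X_\infty M)^\dag Y (X_\infty M)$ to get $V(\mbox{diag}(\lambda)\,X)=0$ for every $X\in\mathfrak{su}(n)$, and then tests against the canonical diagonal generators $D_\ell$ of $\mathfrak{su}(n)$ to read off $\Im(\lambda_1)=\cdots=\Im(\lambda_n)$ entry by entry. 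You instead never fix an eigenbasis first: you rewrite the condition as $\mathrm{tr}\bigl((W-W^\dag)M\bigr)=0$ for all $M\in\mathfrak{su}(n)$, split $W-W^\dag$ into its scalar part $cI$ (with $c\in\imath\mathbb{R}$) and traceless skew-Hermitian part $P'\in\mathfrak{su}(n)$, and conclude $P'=0$ by Frobenius self-orthogonality, so that $W-W^\dag=cI$; the eigenvalue picture then follows because $W=H+(c/2)I$ with $H$ Hermitian commutes with $H$. Both arguments are sound and of comparable length. The paper's route is fully explicit (concrete test matrices after a basis change), while yours exploits the natural inner-product structure on $M^n$ and the direct-sum decomposition into $\imath\mathbb{R}\cdot I$, traceless skew-Hermitian, and Hermitian parts, which is arguably a cleaner conceptual explanation of why all eigenvalues must share the same imaginary part: it is exactly the statement that the skew-Hermitian part of $W$ is a multiple of the identity. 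One small thing worth noting is that your step ``$\mathrm{tr}(P'M)=0$ for all $M\in\mathfrak{su}(n)$ implies $P'=0$'' implicitly uses that $\mathrm{tr}(AB)$ is automatically real when both $A,B$ are skew-Hermitian, so that no information is lost in passing from the real-part condition to the full trace condition; this is true and you use it correctly, but it deserves a one-line remark in a polished write-up.
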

\begin{proof}
According to Theorem~\ref{gct}, it suffices to show the inclusion $E
\subset F$. Let $W \in E \subset \mbox{SU}(n)$.
It is a well-known result in linear algebra
that $W \in \mbox{SU}(n)$ can be decomposed as $W = M
\mbox{diag}(\lambda_1,\dots,\lambda_n) M^\dag$, where $M$ is
unitary, $\lambda_1, \dots, \lambda_n \in \mathbb{C}$, $| \lambda_i
| = 1$ and $\prod_{i=1}^n \lambda_i = 1$. Thus, $V(W) = \sum_{i=1}^{n} \Re(\lambda_i)$ and
$\mathrm{V}(W X^{\dag}_{\infty} B^j_k(0) X_{\infty}) =
\mathrm{V}(\mbox{diag}(\lambda_1,\dots,\lambda_n) (X_{\infty}
M)^\dag B^j_k(0) (X_{\infty} M)) = 0$, for each $1 \leq k \leq m$,
$j \in \mathbb{N}$. Since $X_{\infty} M$ is unitary, it is clear
that $N$:~$\mathfrak{su}(n) \rightarrow \mathfrak{su}(n)$ defined by
$N(Y)=(X_{\infty} M)^\dag Y (X_{\infty} M)$, for every $Y \in
\mathfrak{su}(n)$, is a linear surjective isomorphism. Now, by
assumption, system (\ref{cqs}) is regular and (\ref{rss}) is
satisfied. Hence, $\mathrm{V}(\mbox{diag}(\lambda_1,\dots,\lambda_n)
X ) = 0$, for every $X \in \mathfrak{su}(n)$, and thus
$\mathrm{V}(\mbox{diag}(\lambda_1,\dots,\lambda_n) D_\ell ) = 0$,
for each $1 \leq \ell \leq n$, where
$D_1=\mbox{diag}(\imath,-\imath,0,\dots,0)$,
$D_2=\mbox{diag}(0,\imath,-\imath,0,\dots,0),\dots,
D_{n-1}=\mbox{diag}(0,\dots,\imath,-\imath)$ and
$D_n=\mbox{diag}(\imath,0,\dots,0,-\imath)$ are the canonical
diagonal matrices of $\mathfrak{su}(n)$. From the diagonal structure of
$D_1, \dots, D_n$, we conclude that $\lambda_1, \dots, \lambda_n$
must satisfy $\Im(\lambda_1)=\dots=\Im(\lambda_n)$. This implies
that $W \in F$ and therefore $E \subset F$.
\end{proof}

The proof of Theorem~\ref{dct} is given below.

\begin{proof}
It is clear that $n = \max{(G)}$ because $I \in F$. We will first show that $G$ is finite. Let $x \in G$.
Then, there exist $\lambda_1, \dots, \lambda_n \in \mathbb{C}$ such
that $x = \sum_{j=1}^{n} \Re(\lambda_j)$ with (i) $\prod_{j=1}^{n}
\lambda_j$, (ii) $|\lambda_j|=1$ and (iii) $\Im(\lambda_1) = \dots = \Im(\lambda_n)$. Property (ii) implies that
$\lambda_j=e^{\imath \theta_j}$, for some $\theta_j \in \mathbb{R}$, and it follows from
(iii) that $\lambda_j=\lambda_1=e^{\imath \theta_1}$ or $\lambda_j=e^{\imath
(\pi - \theta_1)}$, for each $1 \leq j \leq n$. Let $n_1$ be the number of $j \in \{1,\dots,n\}$
such that $\lambda_j=\lambda_1$ and define $n_2 = n - n_1$.
Therefore, $x = n_1 \cos(\theta_1) + n_2 \cos(\pi - \theta_1) =
(n_1-n_2)\cos(\theta_1)$ with $n_1, (n_2 + 1) \in \{1, \dots, n\}$
and $n_1+n_2=n$. If $n_1=n_2$, then $x=0$. Thus, assume that $n_1
\neq n_2$. From property (i) we obtain that $e^{\imath n_1
\theta_1}e^{\imath n_2 (\pi - \theta_1)} = 1$. Hence, there exists
$k \in \mathbb{Z}$ such that $n_1 \theta_1 + n_2 (\pi - \theta_1) =
2 k \pi$. This relation implies that $\theta_1 = (2k-n_2)\pi / (n- 2
n_2)$. Note that $n_1, n_2, k$ depend on $x \in G$ and that $n_2$, $n_1 - n_2$ can only assume
a finite number of values. If we show that $\theta_1$ can only assume a finite number of values,
we will have shown that the same holds for $x \in G$, which implies that $G$ is finite.
It is clear that the function $\eta$:~$\mathbb{Z} \rightarrow
\mathbb{R}$ defined as $\eta(\ell)=\cos((2 \ell - n_2) \pi / (n - 2 n_2))$, for all
$\ell \in \mathbb{Z}$, has period $|n - 2 n_2| > 0$. Thus, the values assumed by $\theta_1$ must be finite in number.

Now, the convergence result will be shown. Recall that, for all $X \in
\mbox{SU}(n)$, we have that $-n \leq V(X) \leq n$ and that $V(X) =
n$ if and only if $X=I$. Let $\delta=\max(G \setminus \{n\})$.
Since $G$ is finite, we have that
\begin{equation}\label{ipp}
    \delta < x \leq n \Rightarrow x=n, \hspace{12pt} \mbox{ for all } x \in  G.
\end{equation}
Suppose that $V(W_{t_0})
> \delta$. Since $\mbox{SU}(n)$ is compact and $V$:~$M^n \rightarrow
\mathbb{R}$ is continuous, it follows that $V|\mbox{SU}(n)$ is
uniformly continuous. Define the function $\alpha$:~$\mathbb{R} \rightarrow
\mathbb{R}$ by $\alpha(t) = \mathrm{V}(W_q(t))$, for $t \in
\mathbb{R}$.
Recall that, by construction, we have that
$\dot{\mathrm{V}}(t,W)=\sum_{k=1}^{m} a_k(t,W)^2 \geq 0$, for all $(t,W)
\in \mathbb{R} \times M^n$. Note that $a_k$ and $\dot{\mathrm{V}}$
are smooth, for each $1 \leq k \leq m$. Since $\dot{\mathrm{V}}$ is
a non-negative function, we conclude that $\alpha$ is a smooth
non-decreasing function. Therefore, $V(W_q(t)) \geq V(W_{t_0}) >
\delta$, for all $t \geq t_0$. The uniform continuity of
$V|\mbox{SU}(n)$ then implies that there exists $\mu > 0$ such that
\begin{equation}\label{ucp}
    \| X - W_q(t) \| < \mu \Rightarrow V(X) > \delta, \hspace{12pt} \mbox{ for } t \geq t_0, \; X \in \mbox{SU}(n)
\end{equation}
(indeed, choose $\epsilon = V(W_{t_0}) - \delta > 0$). The
convergence result of Lemma~\ref{edcl-p} means that
\begin{equation*}
    \forall \epsilon > 0 \; \exists \overline{T} \in \mathbb{R} \; \forall t \geq \overline{T} \; \exists \alpha(t) \in F \mbox{ s.t. } \| \alpha(t) - W_q(t) \| < \epsilon.
\end{equation*}
Let $\epsilon > 0$ and define $\overline{\epsilon} = \min(\epsilon,
\mu)$. Thus,
\begin{equation*}
    \forall t \geq \overline{T} \; \exists \alpha(t) \in F \mbox{ s.t. } \| \alpha(t) - W_q(t) \| < \overline{\epsilon} \leq \mu,
\end{equation*}
for some $\overline{T} \in \mathbb{R}$. Define
$\widetilde{T}=\max(\overline{T},t_0)$ and let $t \geq
\widetilde{T}$. Since $\alpha(t) \in F \subset \mbox{SU}(n)$ and
$V(F) \subset G$, (\ref{ucp}) gives that $\delta < V(\alpha(t)) \in
G$. However, $\delta < V(\alpha(t)) \leq n$. Therefore, from
(\ref{ipp}), we obtain that $V(\alpha(t)) = n$, which implies that
$\alpha(t)=I$. We have thus shown that $\lim_{t \rightarrow \infty}
W_q(t) = I$.
\end{proof}

\def\cprime{$'$}

\end{document}